\newcommand{\xra}{\xrightarrow}
\newtheorem{nul}{}[subsection]
\newtheorem{thm}[nul]{Theorem}
\newtheorem{prop}[nul]{Proposition}
\newtheorem{cor}[nul]{Corollary}
\newtheorem{lemma}[nul]{Lemma}
\newtheorem{defn}[nul]{Definition}
\newtheorem*{main-theorem}{Main Theorem}
\theoremstyle{remark}
\newtheorem{ntn}[nul]{Notation}
\theoremstyle{definition}
\newtheorem*{ack}{Acknowledgements}
\newcommand{\Z}{\mathbb{Z}}
\newcommand{\R}{\mathbb{R}}
\newcommand{\Q}{\mathbb{Q}}
\newcommand{\C}{\mathbb{C}}
\newcommand{\uZ}{\underline{\mathbb{Z}}}
\newcommand{\res}{\mathrm{res}}
\newcommand{\tr}{\mathrm{tr}}
\newcommand{\id}{\mathrm{id}}
\newcommand{\cS}{\mathcal{S}}
\newcommand{\Sp}{\mathrm{Sp}}
\newcommand{\mS}{\mathbb{S}}
\title{On the rational $C_2$-homotopy type of ${BSU_{\R}}_m$}
\author{Eunice Sukarto}
\date{}
\begin{document}

\maketitle

\begin{abstract}
    Motivated by a problem in motivic homotopy theory considered by Asok-Fasel-Hopkins, we give a description of the rational $C_2$-equivariant homotopy type of the classifying space ${BSU_{\mathbb{R}}}_m$ in terms of equivariant Eilenberg-Maclane spaces.
\end{abstract}

\section{Introduction}
In \cite{afh22}, Asok, Fasel, and Hopkins study the motivic classifying space $BSL_m$ and show that it has a 'well-behaved' rationalization, either if $m$ is odd or if $m$ is even and -1 is a sum of squares in the base field. In particular, they show that if either of these conditions is satisfied, the Chern classes $c_i:BSL_m\to K(\Z(i),2i)$ together give a rational $\mathbb{A}^1$-weak equivalence
\[BSL_m\to \prod_{i=2}^m K(\Z(i),2i)\]
where $\Z(i)$ is Voevodsky's motivic complex (see \cite[Def 3.1]{mvw06}).

This manifests itself in ordinary homotopy theory as the fact that over the complex numbers, $BSL_m$ is equivalent to $BSU_m$ which is rationally a product of Eilenberg-Maclane spaces 
\[BSU_m\to \prod_{i=1}^m K(\Z, 2i).\] 
However, over the reals, $BSL_m$ is equivalent to $BSO_m$ where  
\[BSO_m\to \prod_{i=1}^m K(\Z,2i)\] is only a rational equivalence when $m$ is odd. If $m$ is even, there is an  Euler class living in degree $m$ which is not accounted for. To better understand this phenomena, we consider these two cases together, which leads to $C_2$-equivariant homotopy theory.

The group $C_2$ acts on $BSU_m$ by complex conjugation, making it into a $C_2$-equivariant space ${BSU_{\R}}_m$ classifying Real vector bundles in the sense of Atiyah. It has underlying space $BSU_m$ and fixed points $BSO_m$.
In this paper, we will give a description of the rational $C_2$-equivariant homotopy type of ${BSU_{\R}}_m$ in terms of equivariant Eilenberg-Maclane spaces.

If $m=2n+1$ is odd, it is known that the equivariant Chern classes give a rational $C_2$-equivalence
\[{BSU_{\R}}_{(2n+1)}\to \prod_{i=2}^{2n+1} K(\uZ,i\rho),\]
where $\rho$ is the regular representation and $K(\uZ,i\rho) = \Omega_{C_2}^{\infty} (\mS^{i\rho}\otimes \uZ)$ is the equivariant Eilenberg-Maclane space corresponding to the suspension of the constant Mackey functor $\uZ$ by the representation sphere $\mS^{i\rho}$. We consider the case where $m=2n$ is even. Let $\iota^2, N:K(A,2n\rho)\to K(A,4n\rho)$ denote the squaring and norm maps, respectively. Then $\iota^2-N$ factors through $K(I,4n)$ where $I$ is the augmentation ideal Mackey functor. Our main theorem is the following.

\begin{thm}[Theorem \ref{thm:main}]
    The map
    \[
        {BSU_{\R}}_{2n}\to \left(\prod_{i=2}^{2n-1}K(\uZ,i\rho)\right) \times \mathrm{fib} \left(K(A,2n\rho) \xra{\iota^2-N} K(I,4n)\right)
    \]
    given by the Chern classes $c_i$ for $i<n$ and the Euler class is a rational $C_2$-equivalence.    
\end{thm}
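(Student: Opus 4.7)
The plan is to reduce the claim to two simpler rational equivalences: one on the underlying space, and one on the $C_2$-fixed-point space. Since both sides are nilpotent $C_2$-spaces (the underlying and fixed-point spaces are all simply connected for $n\ge 1$) and the right-hand side is built from equivariant Eilenberg--MacLane-type spaces, a rational $C_2$-equivalence is detected by these two checks.

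\textbf{Underlying side.} Since $I(C_2/e) = 0$, the underlying space of $K(I,4n)$ is contractible, so the underlying of the fiber $\mathrm{fib}(K(A,2n\rho) \to K(I,4n))$ coincides with that of $K(A,2n\rho)$, namely $K(\Z,4n)$. The induced map on underlying spaces is therefore the classical rational Chern-class equivalence
\[
    BSU_{2n}\to \prod_{i=2}^{2n} K(\Z, 2i),
\]
which disposes of the underlying check.

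\textbf{Fixed-point side.} The $C_2$-fixed points of the source are $BSO_{2n}$, whose rational cohomology is the polynomial ring on Pontryagin classes $p_1,\dots,p_{n-1}$ (in degrees $4,\dots,4(n-1)$) and the Euler class $e$ in degree $2n$, with $p_n = e^2$. I would use the rational idempotent splitting of the Burnside Mackey functor, $A_\Q \cong \uZ_\Q \oplus J_\Q$ (where $J_\Q$ is the geometric summand supported only on $C_2/C_2$), together with the rational identification $I_\Q \cong J_\Q$, to decompose every equivariant Eilenberg--MacLane factor rationally into a Borel part and a geometric part. Computing the rational $RO(C_2)$-graded Bredon cohomology of a point in each summand then identifies the fixed points of each factor, and one checks that, under the map from $BSO_{2n}$, the even-indexed Chern classes restrict as $c_{2j}\mapsto (-1)^j p_j$ (supplying $p_1,\dots,p_{n-1}$) while the odd-indexed ones restrict to $2$-torsion classes on $BSO_{2n}$ and hence vanish rationally.

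\textbf{Main obstacle.} The principal difficulty is the analysis of the factored map $\iota^2 - N : K(A,2n\rho)\to K(I,4n)$ on rational fixed points. The Borel piece of $K(A,2n\rho)^{C_2}$ and the geometric piece $K(J_\Q,2n\rho)^{C_2} \simeq K(\Q, (2n\rho)^{C_2}) = K(\Q, 2n)$ must be tracked separately: one must show that the multiplicative norm $N$ realizes squaring on geometric fixed points, so that $\iota^2 - N$ annihilates the geometric summand, while on the Borel summand the map surjects via the augmentation onto $K(I,4n)^{C_2} \simeq_\Q K(\Q,4n)$. Taking the fiber then leaves $K(\Q,2n)$ on fixed points, receiving the Euler class $e$ from $BSO_{2n}$. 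This is the equivariant avatar of the classical identity $e\cup e = p_n$ in $H^*(BSO_{2n};\Q)$ — precisely what the fiber construction encodes. Once this identification is in place, the Pontryagin and Euler contributions reassemble into $BSO_{2n}\simeq_\Q \prod_{j=1}^{n-1} K(\Q,4j)\times K(\Q,2n)$, completing the fixed-point check and hence the proof.
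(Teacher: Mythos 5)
Your overall strategy — reduce the claim to a rational equivalence on underlying spaces and on $C_2$-fixed points, and compute the rational cohomology of each side via (\ref{eq:q-coh}) — is exactly the paper's, and the underlying-space check is handled correctly. The paper does not invoke the idempotent splitting $A_\Q \cong \uZ_\Q \oplus J_\Q$ explicitly, but its direct chain-level identification of $K(A,2n\rho)^{C_2}\simeq K_{2n}\times K_{4n}$ (geometric $\times$ Borel) amounts to the same decomposition, so that part of your plan is a legitimate repackaging.

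The problem is in what you call the ``main obstacle,'' and it is not a presentational gap but a substantive error. You assert that the norm $N$ realizes squaring on the geometric summand $K_{2n}$, so that $\iota^2-N$ annihilates that summand and is carried entirely by the Borel summand. The paper's Lemmas~\ref{lem:norm-u}--3.5 show the opposite distribution: the norm composite factors through the inclusion of fixed points $j_A\colon K(A,2n\rho)^{C_2}\to K(A,2n\rho)$, and Lemma~3.2 identifies $j_A$ with the projection $K_{2n}\times K_{4n}\to K_{4n}$ that \emph{kills} the geometric summand. Hence $N(x,y)=(y,y^2)$ on fixed points, and the squaring on the geometric factor comes from $\iota^2$ alone (Lemma~\ref{lem:i}, proved via the Euler class $a_{2n\sigma}$). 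Thus $\iota^2-N$ on fixed points is $(x,y)\mapsto x^2-y$, landing in $K(I,4n)^{C_2}\simeq K_{4n}$; it is nonzero on \emph{both} summands, with the geometric summand contributing the square $x^2$ and the Borel summand contributing $-y$.

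This matters for your concluding step. The fiber on fixed points is indeed equivalent to $K_{2n}$ in either description, but the two embeddings into $K(A,2n\rho)^{C_2}=K_{2n}\times K_{4n}$ are genuinely different: the paper gets the graph $x\mapsto(x,x^2)$, whereas your description would give the first factor $x\mapsto(x,0)$. The equivariant Euler class $\epsilon$ restricts on fixed points to $(e_{2n},e_{2n}^2)\in H^{2n}(BSO_{2n})\times H^{4n}(BSO_{2n})$ — the $K_{4n}$ coordinate is forced to be $c_{2n}\vert_{BSO_{2n}}=e_{2n}^2$ because $j_A$ is restriction to the underlying class. That class lies on the graph $\{(x,x^2)\}$ but not on $\{(x,0)\}$, so under your version of $\iota^2-N$ the Euler class would \emph{not} factor through the fiber and the theorem's map would not even be defined. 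You need the calculation of Lemma~3.5 (or an equivalent argument pinning down where the squaring actually lives) to make the fixed-point check go through.

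One small additional point: the paper records $c_{2i}\mapsto p_i$ on $BSO$ without the sign $(-1)^i$; with the conventions in use the sign does not appear, though of course it is harmless for a rational-equivalence argument.
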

The fact that the Euler class maps into the fiber of $\iota^2-N$ is the condition that the norm of the Euler class is equal to its square: $e(NV) = e(V\otimes \C) = e(V)^2$ for $V$ an even dimensional real vector bundle.

\begin{ack}
    I would like to thank my advisor Mike Hopkins for suggesting this problem, for generously sharing his ideas, and for all his guidance throughout this project and over the years.
\end{ack}

\begin{ntn}
    \begin{enumerate}
        \item All spaces and spectra will be rationalized. For a space or spectrum $X$, the notation $X$ will always refer to its rationalization.

        \item Let $\mathcal{S}_G$ and $\mathrm{Sp}_G$ denote the categories of pointed $G$-spaces and $G$-spectra.

        \item For $V$ a $G$-representation, $S^V$ and $\mS^V$ denote the representation sphere in $\cS_G$ and $\Sp_G$, respectively.
        
        \item Let $G=C_2$ denote the cyclic group of order 2 and $e$ the trivial subgroup. Let $\sigma$ and $\rho$ denote the sign and regular representations of $C_2$.

        \item We will not distinguish between a Mackey functor and its corresponding Eilenberg-Maclane $G$-spectrum.
        
        \item For $M$ a Mackey functor and $V$ a $G$-representation, we write $K(M,V) = \Omega_G^{\infty} (\mS^V\otimes M)$ for the infinite $G$-loop space of the $G$-spectrum $\mS^V\otimes M$.
    \end{enumerate}
\end{ntn}

\section{Preliminaries}
$C_2$ acts on ${BSU_{\R}}_m$ by complex conjugation with fixed points $BSU_m^{C_2}\simeq BSO_m$. The rational (non-equivariant) cohomology of $BSU_m$ and $BSO_m$ are polynomials in characteristic classes
\begin{align}
    H^*(BSU_m; \Q) &\cong \Q[c_2,\cdots, c_m] \nonumber\\
    H^*(BSO_{2n}; \Q) &\cong \Q[p_1,\cdots, p_{n-1}, e_{2n}] \label{eq:q-coh}\\
    H^*(BSO_{2n+1}; \Q) &\cong \Q[p_1,\cdots, p_n] \nonumber
\end{align}
where $c_i$, $p_i$, and $e_{2n}$ are the Chern, Pontryagin, and Euler classes with $|c_i|=2i$, $|p_i|=4i$, and $|e_{2n}| = 2n$. The inclusion of fixed points $BSO_m\to BSU_m$ is given by the complexification of vector bundles. In terms of cohomology, the map $BSO_{2n+1}\to BSU_{2n+1}$ sends $c_{2i}$ to $p_i$, and the map $BSO_{2n}\to BSU_{2n}$ sends $c_{2i}$ to $p_i$ for $i<n$ and $c_{2n}$ to $e_{2n}^2$.

\subsection{Mackey functors}
We follow the notation in \cite{dug05}.
The orbit category $\mathcal{O}_{C_2}$ is given by
\[\begin{tikzcd}
	{C_2/e} & {C_2/C_2}
	\arrow["{t}", from=1-1, to=1-1, loop, in=145, out=215, distance=10mm]
	\arrow["i", shift left, from=1-1, to=1-2].
\end{tikzcd}\]
where $it=i$ and $t^2 = \id$. A Mackey functor $M$ for $C_2$ consists of abelian groups $M(C_2/e)$ and $M(C_2/C_2)$ with a $C_2$-action on $M(C_2/e)$, together with restriction and transfer maps
\[\res_M:M(C_2/e)\leftrightarrows M(C_2/C_2):\tr_M\]
satisfying the conditions in \cite[2.4]{dug05}. 

The constant Mackey functor $\uZ$ is given by $\uZ(G/H)=\Q$ where all restriction maps are identities. For $C_2$, this is given by $\uZ(C_2/e) = \uZ(C_2/C_2) = \Q$ with $C_2$ acting trivially on $\uZ(C_2/e)$. The transfer is multiplication by 2.

The Burnside Mackey functor $A$ is the Mackey functor where $A(G/H)$ is the Grothendieck group of finite $H$-sets, with restriction and transfer given by restriction and induction. For $C_2$, $A(C_2/e) = \Q$ with trivial $C_2$-action and $A(C_2/C_2) = \Q[T]/T^2-2T$, where $T$ corresponds to the $C_2$-set $C_2/e$. The restriction and transfer maps are given by $a+bT\mapsto a+2b$ and $1\mapsto T$, respectively.

The augmentation ideal $I$ is the Mackey functor with $I(G/H)$ given by the kernel of the augmentation map $A(G/H)\to A(G/e)=\Q$ which takes an $H$-set to its cardinality (\cite[Def 4.5]{hhr11}). For $C_2$, $I(C_2/e)=0$ and $I(C_2/C_2)\cong \Q\{T-2\}\cong \Q$ is the free $\Q$-module on the generator $T-2$. The constant Mackey functor $\uZ$ is the quotient $A/I$.

If $M$ is a Mackey functor and $S$ a $G$-set, the tensor product $M\otimes S_+$  is the Mackey functor defined by $(M\otimes S_+)(B) = M(B\times S)$. The permutation Mackey functor $\uZ[C_2]:= \uZ\otimes {C_2}_+$ is given by $\uZ[C_2](C_2/e)\cong \Q[C_2]\cong \Q[\tau]/\tau^2-1$ with $C_2$ acting by $\tau$, and $\uZ[C_2](C_2/C_2)=\Q\{1+\tau\}\cong \Q$. The restriction map is $1\mapsto 1+\tau$ and the transfer is the sum of coefficients $a+b\tau\mapsto (a+b)(1+\tau)$.

\subsection{Equivariant characteristic classes}
The $C_2$-CW structure of $MU_{\R}$ gives a splitting 
\[\uZ\wedge MU_{\R} \simeq \uZ\wedge \left(\bigvee_d \mS^{\frac{|d|}{2}\rho} \right)\] 
where $d$ ranges over the $C_2$-cells of $MU_{\R}$, which are the monic monomials in $MU_*\cong \Z[x_1,x_2,\cdots]$ (\cite[Section 5]{hhr11}). Since $MU_{\R}$ is the Thom spectrum of $BU_{\R}$, the equivariant Chern class $c_i$ which corresponds to $x_i$, naturally lives in $\mS^{i\rho} \otimes \uZ$. $\mS^{i\rho} \otimes \uZ$ is computed by the complex $\Z(i)[2i]$ given by
\begin{equation}\label{eq:complex}
    \uZ[C_2]\xra{1-(-1)^i\tau} \cdots \uZ[C_2]\xra{1+\tau}\uZ[C_2]\xra{1-\tau}\uZ[C_2] \to \uZ
\end{equation}
with $\uZ$ in degree $i$ and the leftmost $\uZ[C_2]$ in degree $2i$. Here $[-]$ denotes the shift/suspension functor.
The map $\uZ[C_2]\to \uZ$ is induced by the transfer in $\uZ[C_2]$. On underlying, $\mS^{i\rho} \otimes \uZ$ is given by
\[\Q[C_2]\xra{1-(-1)^i\tau} \cdots \Q[C_2]\xra{1+\tau}\Q[C_2]\xra{1-\tau}\Q[C_2] \to \Q\]
where the last map is the sum of coefficients. On fixed points, it is given by
\[\Q\to \cdots \to \Q\xra{\cdot 2} \Q\xra{0} \Q\xra{\cdot 2}\Q\]
where the leftmost map is $\cdot 2$ if $i$ is odd and 0 if $i$ is even. Thus, $K(\uZ, i\rho)$ is a $C_2$-space with underlying $K(\Q,2i)$ and fixed points $K(\Q,2i)$ if $i$ is even and trivial if $i$ is odd.

The Chern classes together give a map
\[{BSU_{\R}}_{m}\to \prod_{i=2}^{m} K(\uZ,i\rho).\]
By (\ref{eq:q-coh}), this is a rational $C_2$-equivalence if $m$ is odd.
However, if $m=2n$ is even, the Euler class $e_{2n}\in H^{2n}(BSO_{2n};\Q)$ is not in the image of this map. We would like a description of ${BSU_{\R}}_{2n}$ which captures this Euler class.

The Thom class gives a map ${MU_{\R}}_m\to \uZ$. Since the cell structure of ${MSU_{\R}}_m$ gives no obstruction to lifting this to a map ${MSU_{\R}}_m\to A$, the equivariant Euler class of ${BSU_{\R}}_m$ naturally lives in $\mS^{m\rho}\otimes A$. Since $A\otimes {C_2}_+ \simeq \uZ\otimes {C_2}_+$, $K(A,m\rho)$ has the same underlying space as $K(\uZ, m\rho)$ but differs by an additional $\Q$ in degree $m$ on fixed points. The bottom part of the corresponding complex (\ref{eq:complex}) is $\uZ[C_2]\to A$, which is $1+\tau: \Q[C_2]\to \Q$ on underlying and $T:\Q\to \frac{\Q[T]}{T^2-2T}$ on fixed points. If $n$ is even, this Euler class detects the Euler class $e_{2n}\in H^{2n}(BSO_{2n};\Q)$. 

\section{The squaring and the norm maps}
Let $K_m$ denote the Eilenberg-Maclane space $K(\Q,m)$. Let $\iota^2, N:K(A,2n\rho)\to K(A,4n\rho)$ denote the squaring and norm maps, respectively. In this section, we will identify $\iota^2$ and $N$ on underlying spaces and fixed points. 
\begin{prop}
    Let $x$ and $y$ be the fundamental classes in $K_{4n}$ and $K_{8n}$, respectively.
    The squaring map $\iota^2:K(A,2n\rho)\to K(A,4n\rho)$ is given on underlying spaces by $K_{4n}\to K_{8n}$, $y\mapsto y^2$ and on fixed points by $K_{2n}\times K_{4n}\to K_{4n}\times K_{8n}$, $(x,y)\mapsto (x^2,y^2)$.

    The norm map $N:K(A,2n\rho)\to K(A,4n\rho)$ is given on underlying spaces by $K_{4n}\to K_{8n}$, $y\mapsto y^2$ and on fixed points by $K_{2n}\times K_{4n}\to K_{4n}\times K_{8n}$, $(x,y)\mapsto (y,y^2)$.
\end{prop}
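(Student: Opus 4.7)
The plan is to analyze both maps by decomposing the Burnside Green functor rationally and reading off the underlying and fixed-point behavior from the splitting. First I would establish the idempotent decomposition $A \simeq A_1 \oplus A_2$ as Green functors rationally, where the summands are cut out by the orthogonal idempotents $e_1 = T/2$ and $e_2 = 1 - T/2$ in $A(C_2/C_2) = \Q[T]/(T^2 - 2T)$. The summand $A_1$ is isomorphic to $\uZ$ as a Green functor, while $A_2$ is isomorphic to $I$ as a Mackey functor and has trivial underlying. Smashing with $\mS^V$ gives a compatible splitting of the equivariant Eilenberg-Maclane spectra, and combined with the computation of $K(\uZ, i\rho)$ in the preliminaries this identifies $K(A, 2n\rho)^{C_2} \simeq K_{2n} \times K_{4n}$ with the $K_{2n}$ factor coming from $A_2$ and the $K_{4n}$ factor from $A_1$, and similarly $K(A, 4n\rho)^{C_2} \simeq K_{4n} \times K_{8n}$.

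For the squaring map, the key observation is that the multiplication on $A$ is diagonal in this splitting because $e_1 e_2 = 0$. Hence $\iota^2$ decomposes as the product of the squaring maps on each factor, and on each factor this is the usual cup square of the fundamental class of an ordinary Eilenberg-Maclane space. Reading off both the underlying piece (only $A_1$ survives) and the fixed-point piece (both summands contribute) then gives $y \mapsto y^2$ on underlying and $(x, y) \mapsto (x^2, y^2)$ on fixed points.

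For the norm map, I would use the Tambara functor structure on $A$. The crucial computation is the multiplicative induction $N : A(C_2/e) \to A(C_2/C_2)$: for an $n$-element set $S$, the $C_2$-set $S \times S$ with the swap action has $n$ fixed points (the diagonal) and $\binom{n}{2}$ free orbits, so $N(n) = n + \binom{n}{2}\, T$, which in the idempotent basis becomes $N(a) = a^2 e_1 + a e_2$. At the spectrum level, $N : K(A, 2n\rho) \to K(A, 4n\rho)$ is obtained by applying this Tambara norm to the underlying fundamental class $y$. Under the splitting, the $A_1$-component of $N(y)$ is the quadratic term, contributing $y^2$ to both the underlying and the $K_{8n}$ fixed-point factor, while the $A_2$-component is the linear term, contributing $y$ to the $K_{4n}$ fixed-point factor and vanishing on underlying. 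This yields $y \mapsto y^2$ on underlying and $(x, y) \mapsto (y, y^2)$ on fixed points.

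The main obstacle will be this last step: justifying that the spectrum-level norm is really computed on cohomology classes by precisely the Tambara formula, and that under the chosen idempotent splitting the $A_2$-component of $N(y)$ is literally the class $y$ rather than a nonzero scalar multiple. This requires a careful comparison of the multiplicative norm structure on $HA$ with the Tambara structure on $A$, and fixing a compatible identification of $K(A, 2n\rho)^{C_2}$ with $K_{2n} \times K_{4n}$.
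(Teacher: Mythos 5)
Your approach is genuinely different from the paper's. The paper does not use the idempotent decomposition $A\simeq\uZ\oplus I$ at all; instead it analyzes the composites $\Delta$-then-$\smile$ (for $\iota^2$) and $\Delta$-then-norm (for $N$) directly, and pins down the ambiguous components using Euler classes $a_{V}:\mS^0\to\mS^V$. For the squaring map, the paper shows $a_{2n\sigma}:K(A,2n)\to K(A,2n\rho)$ picks out the $K_{2n}$ factor of the fixed points (its image is $K(I,2n\rho)$), and then uses the naturality square $\iota^2\circ a_{2n\sigma}=a_{4n\sigma}\circ\iota^2$ to deduce $x\mapsto x^2$ on that factor. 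For the norm, the key lemma is that the norm commutes with the inclusion of the bottom cell, $N\circ f_V=f_{V\otimes\rho}\circ N$, which combined with the same Euler class computation fixes the normalization $?=y$ on the $K_{4n}$ factor.

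Your squaring argument via $e_1=T/2$, $e_2=1-T/2$ and the Green-functor splitting $A\simeq A_1\oplus A_2\cong\uZ\oplus I$ is correct and arguably cleaner: since $e_1e_2=0$ the cup product is block-diagonal, and the claim reduces to the cup square on each of $K(\uZ,2n\rho)$ and $K(I,2n\rho)\simeq K(I,2n)$. This replaces the paper's Euler class diagram chase with pure algebra and is a nice alternative proof of Lemma 3.3.

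For the norm, however, the gap you flag at the end is a real one and is not a detail to be swept away. The idempotent splitting is a splitting of $A$ as a Green functor, not as a Tambara functor---precisely because the Tambara norm $N(a)=a^2e_1+ae_2$ mixes the two summands---so the spectrum-level norm map does \emph{not} split and you cannot read it off summand-by-summand the way you did the square. What you actually need is a statement that, on fixed points, the norm map $K_{4n}\to K_{4n}\times K_{8n}$ is given by a formula of the shape $y\mapsto(\lambda y, y^2)$ with $\lambda$ determined by the Tambara structure, and then a normalization argument to see $\lambda=1$. The degree bookkeeping also deserves care: the $\pi_0$ Tambara formula lives in degree zero, whereas here you are norming a class of degree $2n\rho$, and the translation between the two passes through geometric fixed points. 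The paper's Lemma 3.6 (norm commutes with the inclusion of the bottom cell $S^V\to K(A,V)$) is exactly the device that supplies the normalization $\lambda=1$ with no ambiguity, and without some replacement for it your argument establishes the answer only up to a scalar on the $K_{4n}$ component.

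Summary: your treatment of $\iota^2$ is a valid and somewhat more conceptual alternative to the paper's; your treatment of $N$ has the right shape and predicts the right answer, but the acknowledged gap concerning the normalization (and the precise comparison between the Tambara norm on $\pi_0$ and the norm on $RO(C_2)$-graded cohomology classes) is genuine and is precisely what the paper's Lemma 3.6 plus the Euler class Lemma 3.4 are there to close.
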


\begin{lemma}
    The inclusion of fixed points $j_{\Z}:K(\uZ, 2n\rho)^{C_2}\to K(\uZ, 2n\rho)$ is the identity map on $K_{4n}$. 
    The inclusion of fixed points $j_A:K(A,2n\rho)^{C_2}\to K(A,2n\rho)$ is the map $K_{2n}\times K_{4n}\to K_{4n}$ given by projection onto the second factor.
\end{lemma}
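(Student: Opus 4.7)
The plan is to compute both fixed-point inclusions directly from the chain complex (\ref{eq:complex}) for $\mS^{2n\rho}\otimes\uZ$ and its variant for $\mS^{2n\rho}\otimes A$, by tracking the Mackey functor restriction structure on the resulting homology.

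For $j_\Z$, the source and target are both rationally $K_{4n}$, so $j_\Z$ is determined up to homotopy by the scalar in $\Q\cong H^{4n}(K_{4n};\Q)$ it induces on $\pi_{4n}$. The Mackey functor $\underline{\pi}_{4n}(\mS^{2n\rho}\otimes\uZ)$ is the top homology of (\ref{eq:complex}) for $i=2n$, which sits inside $\uZ[C_2]$ at position $4n$ as $\ker(1-\tau)$. At level $C_2/e$ this kernel is $\Q\{1+\tau\}\subset\Q[C_2]$, and at level $C_2/C_2$ it is all of $\uZ[C_2](C_2/C_2)=\Q\{1+\tau\}$; since the restriction of $\uZ[C_2]$ acts by $1+\tau\mapsto 1+\tau$, the induced restriction on these 1-dimensional quotients is the identity $\Q\to\Q$. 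Hence $j_\Z$ induces the identity on $\pi_{4n}$ and is therefore the identity map of $K_{4n}$.

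For $j_A$, I would use the corresponding complex computing $\mS^{2n\rho}\otimes A$, which differs from the $\uZ$ case only in the rightmost map $\uZ[C_2]\to A$: on underlying this remains the sum-of-coefficients map $a+b\tau\mapsto a+b$, but on fixed points it is $T\colon\Q\to\Q[T]/(T^2-2T)$ as stated in the excerpt. Consequently the underlying complex is unchanged and the fixed-point complex agrees with the $\uZ$ case except at the bottom. At degree $4n$, the top homology is identical to the $\uZ$ case, contributing a $K_{4n}$-factor on both underlying and fixed points, with restriction given by the identity exactly as for $j_\Z$. At degree $2n$, the underlying homology remains zero (the underlying complex was not modified), while the fixed-point homology picks up the cokernel $\Q[T]/(T^2-2T)/\mathrm{im}(T)=\Q\{[1]\}$, producing the extra $K_{2n}$-factor seen only on fixed points. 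The restriction at degree $2n$ is thus $\Q\to 0$, so $j_A$ is the identity on the $K_{4n}$-factor and trivial on the $K_{2n}$-factor, i.e., projection onto the second factor of $K_{2n}\times K_{4n}$.

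The main subtlety is ensuring the identifications $K(\uZ,2n\rho)^{C_2}\simeq K_{4n}\simeq K(\uZ,2n\rho)$ are compatible enough that the word \emph{identity} is meaningful; similarly for the two $K_{4n}$-factors appearing in $j_A$. Since all of these arise from the same top-degree Mackey functor in (\ref{eq:complex}) with canonical generator $1+\tau$ at each level, the Mackey functor restriction directly pins down the identity scalar, and the remaining content of the lemma is bookkeeping about which degrees contribute on which side.
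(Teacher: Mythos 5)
Your computation of the homotopy Mackey functors and their restriction maps is correct, and in spirit it is the same calculation the paper does (the paper just phrases the $K_{4n}$-part via the quotient map $A\to\uZ$ and the already-known $j_\Z$, rather than recomputing from the chain complex). But there is a genuine gap in the last step for $j_A$: you conclude that $j_A\colon K_{2n}\times K_{4n}\to K_{4n}$ is the projection purely from its effect on homotopy groups, and a space-level map between these rational Eilenberg--MacLane spaces is \emph{not} determined by $\pi_*$. Concretely, $[K_{2n}\times K_{4n},K_{4n}]\cong H^{4n}(K_{2n}\times K_{4n};\Q)$ is two-dimensional, spanned by $y$ and $x^2$, whereas the induced map on $\pi_*$ sees only the coefficient of $y$. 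Your restriction computation in degree $2n$ ($\Q\to 0$) is forced and carries no information; it does not rule out a component of the form $(x,y)\mapsto y+\alpha x^2$.

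What closes the gap is the observation that $j_A$ is an infinite loop map --- it is $\Omega^\infty$ of the restriction map $(\mS^{2n\rho}\otimes A)^{C_2}\to(\mS^{2n\rho}\otimes A)^e$ of rational spectra, and such a map $\Sigma^{2n}H\Q\vee\Sigma^{4n}H\Q\to\Sigma^{4n}H\Q$ is linear and determined by $\pi_*$. The paper makes exactly this point in the form ``there are no nonzero maps of spectra $\Sigma^{2n}H\Q\to\Sigma^{4n}H\Q$,'' which kills the $K_{2n}$-component at the spectrum level. Add that sentence (or equivalently, note that all your identifications are being made through $\Omega^\infty$ of spectrum maps, so no nonlinear $x^2$ term can appear) and your proof is complete.
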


\begin{proof}
    The first statement is clear. For the second, we need to identify $j_A$ on each component of $K(A,2n\rho)^{C_2}$. $j_A$ is equal to the map on fixed points of the infinite $C_2$-loop map $K(A, 2n\rho)\to K(\uZ, 2n\rho)$ induced by the projection $A\to \uZ$. Since the inclusion of fixed points $j_{\Z}$ on $K(A,2n\rho)$ is the identity map, $j_A$ is the identity on $K_{4n}$. Since there are no nonzero maps of spectra $\Sigma^{2n}H\Q\to \Sigma^{4n}H\Q$, $j_A$ is 0 on $K_{2n}$.
\end{proof}

\subsection{The squaring map}
The squaring map $\iota^2$ is given as the composite of the diagonal $\Delta$ followed by the cup product
\[\begin{tikzcd}
	{\iota^2:} & {K(A,2n\rho)} & {K(A,2n\rho)\times K(A,2n\rho)} & {K(A,4n\rho)} \\
	{\mathrm{u}} & {K_{4n}} & {K_{4n}\times K_{4n}} & {K_{8n}} \\
	{\mathrm{fp}} & {K_{2n}\times K_{4n}} & {(K_{2n}\times K_{4n})\times (K_{2n}\times K_{4n})} & {K_{4n}\times K_{8n}}
	\arrow["\Delta", from=1-2, to=1-3]
	\arrow["\smile", from=1-3, to=1-4]
	\arrow["\Delta", from=2-2, to=2-3]
	\arrow["\smile", from=2-3, to=2-4]
	\arrow["{j_A}", from=3-2, to=2-2]
	\arrow["\Delta", from=3-2, to=3-3]
	\arrow["{j_A\times j_A}"', from=3-3, to=2-3]
	\arrow[from=3-3, to=3-4]
	\arrow["{j_A}", from=3-4, to=2-4]
\end{tikzcd}\]
where u denotes the underlying space and fp the fixed points. $C_2$ acts by the diagonal action on $K(A,2n\rho)\times K(A,2n\rho)$. $\iota^2$ is the squaring map on underlying spaces. On fixed points, a diagram chase shows that $\iota^2$ is the squaring map on the component $K_{4n}$. We claim that it is also the squaring map on the component $K_{2n}$.

\begin{lemma}\label{lem:i}
    $\iota^2$ is given by the squaring map on the component $K_{2n}$ of $K(A,2n\rho)^{C_2}$.
\end{lemma}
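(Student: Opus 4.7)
The plan is to analyze the cup product on $K(A,2n\rho)^{C_2}\simeq K_{2n}\times K_{4n}$ via the rational decomposition $A\simeq I\oplus\uZ$ of Mackey functors, and to show it is coordinate-wise under this decomposition. The splitting gives product decompositions $K(A,m\rho)\simeq K(I,m\rho)\times K(\uZ,m\rho)$, and on $C_2$-fixed points it identifies $K_{2n}=K(I,2n\rho)^{C_2}$, $K_{4n}=K(\uZ,2n\rho)^{C_2}$, and similarly $K_{4n}=K(I,4n\rho)^{C_2}$, $K_{8n}=K(\uZ,4n\rho)^{C_2}$ for the target. Since $I$ is an ideal of $A$, the multiplication $A\otimes A\to A$ has no $I$-involving component hitting the $\uZ$-summand, so the only contribution to the $K_{8n}$-factor of the target comes from $\uZ\otimes\uZ\to\uZ$. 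In particular $\iota^2|_{K_{2n}}$ has trivial $K_{8n}$-component and lands in $K_{4n}$.

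Next, I would show that the mixed cross-terms $K(I,2n\rho)\wedge K(\uZ,2n\rho)\to K(I,4n\rho)$ and its symmetric are null on $C_2$-fixed points, i.e.\ that the induced map $K_{2n}\wedge K_{4n}\to K_{4n}$ is nullhomotopic. Such a map is classified by $\tilde H^{4n}(K_{2n}\wedge K_{4n};\Q)$; by the K\"unneth theorem this is $\bigoplus_{i+j=4n,\ i,j>0}\tilde H^i(K_{2n};\Q)\otimes\tilde H^j(K_{4n};\Q)$. Since $\tilde H^*(K_{2n};\Q)$ is concentrated in positive multiples of $2n$ and $\tilde H^*(K_{4n};\Q)$ in positive multiples of $4n$, this tensor product is supported in total degrees at least $6n$, so the group in degree $4n$ vanishes.

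Combining these observations, the cup product on $K(A,2n\rho)^{C_2}$ is coordinate-wise under the splitting: $(x_1,y_1)\smile(x_2,y_2)=(x_1x_2,\,y_1y_2)$, where $x_1x_2\in K_{4n}$ arises from the non-unital multiplication $I\otimes I\to I$ and $y_1y_2\in K_{8n}$ from the standard $\uZ\otimes\uZ\to\uZ$. Squaring yields $\iota^2(x,y)=(x^2,y^2)$. The $y^2$ factor is the cohomological squaring already established. For the $x^2$ factor, I would use the Mackey functor computation $(T-2)^2=-2(T-2)$ in $A(C_2/C_2)$: the idempotent $1-T/2\in A(C_2/C_2)$ satisfies $(1-T/2)^2=1-T/2$, and choosing it as the generator of $I(C_2/C_2)\cong\Q$ makes the multiplication $I\otimes I\to I$ the standard one on $\Q$. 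With the corresponding identifications $K(I,m\rho)^{C_2}\simeq K_m$, the induced squaring on the $K_{2n}$-factor is $x\mapsto x^2$.

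The main obstacle I anticipate is the normalization step in the last paragraph: dimensional reasons force the induced map $K_{2n}\to K_{4n}$ to be of the form $x\mapsto c\cdot x^2$ for some $c\in\Q$, and verifying $c=1$ (rather than merely $c\neq 0$) requires a careful tracking of generators through the idempotent splitting of $A$. The dimensional vanishing of the mixed cross-terms is the cleaner, structural input that reduces the problem to this scalar computation in the ring $A(C_2/C_2)$.
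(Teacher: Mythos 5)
Your argument is correct and takes a genuinely different route from the paper. You decompose $A\simeq I\oplus\uZ$ rationally (via the central idempotent $e=1-T/2$), argue that the cup product on fixed points splits coordinate-wise (the $K_{8n}$-component only sees $\uZ\otimes\uZ$ because $I$ is an ideal, and the cross-term $K_{2n}\wedge K_{4n}\to K_{4n}$ dies for degree reasons), and then reduce the lemma to a scalar computation in $A(C_2/C_2)$. The paper instead observes that the Euler class $a_{2n\sigma}$ is a natural transformation, so multiplication by $a_{2n\sigma}$ intertwines the squaring on $K(A,2n)$ with the squaring on $K(A,2n\rho)$; since the non-suspended squaring on $K(A,2n)^{C_2}$ is already known to be coordinate-wise, Lemma \ref{lem:euler-a} (which identifies the image of $a_{2n\sigma}$ as exactly the $K_{2n}$-component) closes the argument by a diagram chase. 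The main advantage of the paper's route is precisely the issue you flag at the end: naturality of $a_{2n\sigma}$ forces the scalar to be $1$ with no generator-tracking, because the comparison is against a squaring that is already normalized. Your route needs the extra observation that the identification $K(I,2n\rho)^{C_2}\simeq K_{2n}$ used implicitly throughout the paper (coming from the map labelled $\mathrm{proj}:T\mapsto 0$ in Lemma \ref{lem:euler-a}) corresponds exactly to taking $1-T/2$, not $T-2$, as the generator of $I(C_2/C_2)$; once you pin that down, $(1-T/2)^2=1-T/2$ gives $c=1$ and your proof is complete. So the two proofs are consistent, with the paper trading your explicit idempotent bookkeeping for a comparison map that is needed elsewhere anyway.
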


For a $G$-representation $V$, the Euler class $a_V\in \pi_{-V}^G(\mS^0)$ is the element corresponding to the map $\mS^0\hookrightarrow \mS^V$ induced by the inclusion $0\subseteq V$ (\cite[Def 3.11]{hhr16}). For a Mackey functor $M$, $a_V:\mS^0\otimes M\to \mS^V\otimes M$ gives a natural transformation of $RO(G)$-graded cohomology theories $H_G^{\bigstar}(-;M)\xra{\cdot a_V} H_G^{\bigstar +V}(-;M)$ which is multiplication by $a_V$.

Let $a_{2n\sigma}: \mS^{2n}\to \mS^{2n\rho}$ be the (suspended) Euler class induced by the inclusion $0\subset 2n\sigma$. This induces a map of $C_2$-spectra $\mS^{2n}\otimes A\to \mS^{2n\rho}\otimes A$ hence an infinite $C_2$-loop map $K(A,2n)\to K(A,2n\rho)$.

\begin{lemma}\label{lem:euler-a}
    The Euler class $a_{2n\sigma}:K(A,2n)\to K(A,2n\rho)$ has image $K(I,2n\rho)$. In particular, $a_{2n\sigma}$ picks out the component $K_{2n}$ in $K(A,2n\rho)^{C_2}$.
\end{lemma}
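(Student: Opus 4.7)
The plan is to exploit the rational splitting $A\simeq \uZ\oplus I$ of Mackey functors. Since the category of rational $C_2$-Mackey functors is semisimple, the short exact sequence $0\to I\to A\to \uZ\to 0$ splits; a section $\uZ\to A$ is given on $C_2/C_2$ by $1\mapsto T/2$. By naturality of $a_{2n\sigma}$ in its Mackey functor argument, this decomposes $a_{2n\sigma}:\mS^{2n}\otimes A\to \mS^{2n\rho}\otimes A$ as the direct sum of its restrictions to the $\uZ$- and $I$-summands. It then suffices to show that the $\uZ$-component is null rationally and the $I$-component is a rational $C_2$-equivalence, so that the image of $a_{2n\sigma}$ is exactly $\mS^{2n\rho}\otimes I$, giving $K(I,2n\rho)$ after applying $\Omega_{C_2}^\infty$.

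For the $\uZ$-summand I would argue that even $a_\sigma$ alone is null rationally on $\uZ$, whence $a_{2n\sigma}=a_\sigma^{2n}$ is null as well. The underlying of $a_\sigma:\mS^0\to\mS^\sigma$ is the pointed map $\mS^0\to\mS^1$, which is nullhomotopic, so $\res(a_\sigma\cdot 1_{\uZ})=0$. Since $\tr\circ\res$ acts on $\pi_\ast^{C_2}(\uZ)$ as multiplication by the Burnside ring element $T=[C_2/e]$, whose image under $A\to\uZ$ is $2$, we obtain $2\,a_\sigma=\tr(\res(a_\sigma))=0$ in $\pi_{-\sigma}^{C_2}(\uZ)$, and this vanishes rationally.

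For the $I$-summand, the underlying is trivial because $I(C_2/e)=0$. On fixed points I would use the cofiber sequence $\mS^0\xra{a_\sigma}\mS^\sigma\to (C_2)_+\wedge \mS^1$ obtained by collapsing the two fixed points of $\mS^\sigma$. Smashing with $I$ and applying $(-)^{C_2}$ yields, via the Wirthmüller-type identification $((C_2)_+\wedge Y)^{C_2}\simeq Y^e$, a cofiber whose fixed points are equivalent to the underlying $(\mS^1\otimes I)^e=0$. Hence $a_\sigma:I\to \mS^\sigma\otimes I$ is an equivalence on fixed points and thus a $C_2$-equivalence; iterating (the same analysis works at each step since each $\mS^{k\sigma}\otimes I$ still has trivial underlying) shows $a_{2n\sigma}:\mS^{2n}\otimes I\xra{\sim}\mS^{2n\rho}\otimes I$.

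Combining, the map $a_{2n\sigma}:K(A,2n)\to K(A,2n\rho)$ has image exactly $K(I,2n\rho)$. For the final statement, $K(I,2n\rho)$ has trivial underlying space, and from the fiber sequence $K(I,2n\rho)\to K(A,2n\rho)\to K(\uZ,2n\rho)$ together with the projection description of $j_A$ in the previous lemma, its $C_2$-fixed points equal $\mathrm{fib}\bigl(K_{2n}\times K_{4n}\xra{\mathrm{pr}_2}K_{4n}\bigr)=K_{2n}$, which is precisely the $K_{2n}$ component of $K(A,2n\rho)^{C_2}$. I expect the main subtlety to be verifying the Wirthmüller identification $((C_2)_+\wedge Y)^{C_2}\simeq Y^e$ used in the $I$-summand argument; the splitting and the 2-torsion calculation are both routine once one is working rationally.
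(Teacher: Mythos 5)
Your proof is correct, but it takes a genuinely different route from the paper's. The paper argues at the chain level: it writes out $a_{2n\sigma}\colon \mS^{2n}\otimes A\to\mS^{2n\rho}\otimes A$ as a map of explicit complexes of Mackey functors (coming from the $C_2$-CW filtration of $\mS^{2n\rho}$) and reads off directly from homotopy groups that the fixed-points map is $(\mathrm{proj},0)\colon K(\tfrac{\Q[T]}{T^2-2T},2n)\to K_{2n}\times K_{4n}$. Your approach instead uses the rational semisimplicity of $C_2$-Mackey functors to split $A\simeq\uZ\oplus I$, then handles the two summands conceptually: the $\uZ$-summand dies because $a_\sigma\cdot 1_{\uZ}$ is $2$-torsion (a Frobenius reciprocity/transfer argument, since $\res a_\sigma=0$), and on the $I$-summand $a_\sigma$ is an equivalence because the cofiber $(C_2)_+\wedge\mS^1\otimes I$ has trivial fixed points and trivial underlying (using that $I^e=0$ and the Wirthmüller identification $((C_2)_+\wedge Y)^{C_2}\simeq Y^e$, which is indeed standard and not a real subtlety). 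Both arguments are sound; the paper's is more elementary and self-contained given it has already set up the explicit complexes (\ref{eq:complex}), while yours is more structural, scales to other degrees and representations without re-doing cell-by-cell bookkeeping, and makes transparent \emph{why} the image is $K(I,2n\rho)$ --- namely that $HI$ is $\widetilde{E C_2}$-local, so $a_\sigma$ is invertible on it. Your final step identifying $K(I,2n\rho)^{C_2}$ with the $K_{2n}$ factor via the fiber sequence and the previously established description of $j_A$ is also fine, and matches the paper's conclusion.
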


\begin{proof}
    The map $\mS^{2n}\otimes A\to \mS^{2n\rho}\otimes A$ is given on underlying u and fixed points fp as follows.
    \[\begin{tikzcd}
	{\mathrm{u}} & 0 & {\Q[C_2]} && {\mathrm{fp}} & 0 & \Q \\
	& \vdots & \vdots &&& \vdots & \vdots \\
	& 0 & {\Q[C_2]} &&& 0 & \Q \\
	& \Q & \Q &&& {\frac{\Q[T]}{T^2-2T}} & {\frac{\Q[T]}{T^2-2T}}
	\arrow[from=1-2, to=1-3]
	\arrow[from=1-2, to=2-2]
	\arrow["{1-\tau}", from=1-3, to=2-3]
	\arrow[from=1-6, to=1-7]
	\arrow[from=1-6, to=2-6]
	\arrow[from=1-7, to=2-7]
	\arrow[from=2-2, to=3-2]
	\arrow[from=2-3, to=3-3]
	\arrow[from=2-6, to=3-6]
	\arrow[from=2-7, to=3-7]
	\arrow[from=3-2, to=3-3]
	\arrow[from=3-2, to=4-2]
	\arrow["{1+\tau}", from=3-3, to=4-3]
	\arrow[from=3-6, to=3-7]
	\arrow[from=3-6, to=4-6]
	\arrow["T", from=3-7, to=4-7]
	\arrow["\id", from=4-2, to=4-3]
	\arrow["\id", from=4-6, to=4-7]
    \end{tikzcd}\]
    The bottom map on fixed points is the identity since $a_{2n\sigma}$ is the inclusion of the bottom cell. 

    Looking at what it does on homotopy groups shows that the map $K(A,2n)\to K(A,2n\rho)$ is given by
    \[\begin{tikzcd}
	& {K(A,2n)} & {K(A,2n\rho)} \\
	{\mathrm{u}} & {K(\Q,2n)} & {K(\Q,4n)} \\
	{\mathrm{fp}} & {K(\frac{\Q[T]}{T^2-2T},2n)} & {K(\Q,2n)\times K(\Q,4n)}
	\arrow[from=1-2, to=1-3]
	\arrow["0", from=2-2, to=2-3]
	\arrow["{\res_A}", from=3-2, to=2-2]
	\arrow["{(\mathrm{proj},0)}"', from=3-2, to=3-3]
	\arrow["{j_A}", from=3-3, to=2-3]
    \end{tikzcd}\]
    where proj is the projection $\frac{\Q[T]}{T^2-2T}\to \Q$ sending $T$ to 0. Thus, $a_{2n\sigma}$ picks out $K_{2n}$.
\end{proof}
    
\begin{proof}[Proof of Lemma \ref{lem:i}]
    Let $x$ denote the fundamental class in $K_{2n}$.
    Since $a_{2n\sigma}$ is a natural transformation $H_{C_2}^2(-;A)\to H_{C_2}^{2\rho}(-;A)$,
    \[(a_{2n\sigma}x)^2 = a_{2n\sigma}^2 x^2 = a_{4n\sigma} x^2,\]
    so there is a commutative diagram
    \[\begin{tikzcd}
	{K(A,2n)} & {K(A,4n)} \\
	{K(A,2n\rho)} & {K(A,4n\rho)}
	\arrow["{\iota^2}", from=1-1, to=1-2]
	\arrow["{a_{2n\sigma}}"', from=1-1, to=2-1]
	\arrow["{a_{4n\sigma}}", from=1-2, to=2-2]
	\arrow["{\iota^2}"', from=2-1, to=2-2].
    \end{tikzcd}\]
    Since $\iota^2: K(A,2n)\to K(A,4n)$ is the squaring map on each component of its fixed points, a diagram chase together with Lemma \ref{lem:euler-a} shows that $\iota^2$ is the squaring map on the component $K_{2n}$ of $K(A,2n\rho)^{C_2}$.
\end{proof}

\subsection{The norm map}
Let $N:=N_H^G$ denote the norm functor for a subgroup $H\leq G$. The norm $N:\cS_H\to \cS_G$ on spaces and $N:\Sp_H\to \Sp_G$ on spectra are strong symmetric monoidal and are related by the adjunction
\[\Sigma_G^{\infty}: \mathcal{S}_G\rightleftarrows \mathrm{Sp}_G: \Omega_G^{\infty}.\] 
The left adjoint $\Sigma_G^{\infty}$ is strong symmetric monoidal and commutes with the norm: $N\Sigma_G^{\infty} = \Sigma_G^{\infty}N$. However, the right adjoint $\Omega_G^{\infty}$ is only lax symmetric monoidal, so $N\Omega_G^{\infty}\neq \Omega_G^{\infty}N$, but there is a map from the former to the latter. If $R\in \Sp_G$ is an equivariant commutative algebra (\cite[2.3]{hhr16}), there is a map $NR\to R$. For $X\in \cS_G$, there is a map $X\to NX$ which factors through the product $\prod_{i\in G/H}X_i$ (\cite[2.3.3]{hhr16}).

For $G=C_2$, let $t$ be the generator in $C_2$. For $X\in \mathcal{S}_{C_2}$, the map $X\to NX$ is given by
\[X\to X\times X\to NX= X\wedge X\]
where $X\times X$ and $X\wedge X$ are considered as (pointed) $C_2$-spaces with $C_2$ acting by swapping the two factors i.e. $t (x,y) = (y,x)$ and the first map sends $x$ to $(x, tx)$. 

Let $V$ be a $C_2$-representation. The norm map $N:K(A, V)\to K(A,V\otimes \rho)$ is given by the composite
\begin{equation}\label{eq:norm-composite}
    K(A,V)\to K(A,V)\times K(A,V)\to NK(A,V)\to \Omega_G^{\infty} N(\mS^V\otimes A)\to K(A, V\otimes \rho)
\end{equation} 
where $NK(A,V)$ and $N(\mS^V\otimes A)$ are the space and spectrum level norms, respectively. The last map comes from the ring structure of $A$: $N(\mS^V\otimes A) \simeq \mS^{V\otimes \rho} \otimes NA\to \mS^{V\otimes \rho}\otimes A$. 

\begin{lemma}\label{lem:norm-u}
    The norm map $N:K(A,V)\to K(A,V\otimes \rho)$ is given on underlying spaces as the cup square $K(A,|V|)\to K(A,2|V|)$ where $|V| = \dim V$.
\end{lemma}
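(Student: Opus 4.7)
My plan is to evaluate the defining composite (\ref{eq:norm-composite}) for $N$ on underlying spaces, step by step, and recognize the result as the classical construction of the cup square via the diagonal followed by the external product. Rationally $A(C_2/e)=\Q$, so the underlying of $K(A,V)$ is $K(\Q,|V|)$ and the underlying of $K(A,V\otimes\rho)$ is $K(\Q,2|V|)$, which are the correct source and target for the cup square.

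Next I would identify each of the four constituents of (\ref{eq:norm-composite}) on underlying spaces. The map $K(A,V)\to K(A,V)\times K(A,V)$, given by $x\mapsto (x,tx)$, becomes the honest diagonal $\Delta$ once the $C_2$-action is forgotten. The collapse $K(A,V)\times K(A,V)\to NK(A,V)=K(A,V)\wedge K(A,V)$ has underlying the standard collapse from product to smash. The comparison $NK(A,V)\to \Omega_G^{\infty}N(\mS^V\otimes A)$ comes from the lax monoidality of $\Omega_G^{\infty}$; on underlying, the norm $N_e^{C_2}$ is the smash square, and this map becomes the usual external product $\Omega^{\infty}(\mS^{|V|}\otimes\Q)\wedge \Omega^{\infty}(\mS^{|V|}\otimes\Q)\to \Omega^{\infty}(\mS^{2|V|}\otimes(\Q\otimes \Q))$ of Eilenberg-MacLane spaces. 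Finally, the map $\Omega_G^{\infty}N(\mS^V\otimes A)\to K(A,V\otimes \rho)$ is induced by the algebra structure $NA\to A$, whose underlying is the ring multiplication $\Q\otimes \Q\to \Q$.

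Stringing these four underlying maps together yields exactly the diagonal, followed by the external cup product of the fundamental class with itself, followed by multiplication on coefficients. This is the classical definition of the cup square $\iota\mapsto \iota\smile \iota$ as a map $K(\Q,|V|)\to K(\Q,2|V|)$, which is what we wanted.

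The main obstacle I foresee is verifying that the lax-monoidal comparison map agrees on underlying with the classical external product of Eilenberg-MacLane spaces. This is essentially a bookkeeping exercise: once one unpacks that on underlying the norm $N_e^{C_2}$ coincides with the smash square and that the lax monoidal structure on $\Omega^{\infty}$ restricts to the standard one non-equivariantly, the identification is formal. No calculation in ordinary cohomology is needed beyond recognizing that the resulting composite computes the cup product.
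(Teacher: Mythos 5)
Your proposal matches the paper's proof in approach: both unwind the defining composite for $N$ on underlying spaces and recognize the result as the cup square. One small imprecision: the underlying of the norm diagonal is $x\mapsto(x,tx)$ rather than the honest diagonal, so the paper more carefully traces $x\mapsto(x,tx)\mapsto x\cdot t(tx)=x^2$; the two occurrences of $t$ cancel, so your shortcut of treating the first map as $\Delta$ and the last as the ordinary product lands at the same answer, but the intermediate identifications you use differ by a twist from the literal ones.
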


\begin{proof}
    On underlying, the composite (\ref{eq:norm-composite}) is given by $x\mapsto (x,tx)\mapsto x\cdot t(tx) = x^2$, which is the cup square.
\end{proof}

Let $\eta:\mS^0\to A$ denote the unit. Consider the inclusion of the bottom cell $f_V:S^V\to K(A,V)$ adjoint to $\mS^V\otimes \eta$.
\begin{lemma}\label{lem:norm-cell}
    The norm commutes with the inclusion of the bottom cell: $N\circ f_V = f_{V\otimes \rho} \circ N$.
\end{lemma}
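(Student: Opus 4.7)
My plan is to reduce the square to the identity $\mu \circ N\eta = \eta$, which holds because $A$ is a $C_2$-equivariant commutative ring spectrum with unit $\eta$ and multiplication $\mu\colon NA \to A$. The first step is to invoke naturality of the space-level norm $\nu_X \colon X \to NX$, which is a natural transformation of endofunctors of $\cS_{C_2}$, along the equivariant map $f_V \colon S^V \to K(A,V)$. This identifies $N \circ f_V$ with the composite
\[
S^V \xrightarrow{\nu_{S^V}} NS^V \xrightarrow{Nf_V} NK(A,V) \to \Omega_{C_2}^{\infty} N\Sigma_{C_2}^{\infty} K(A,V) \to K(A, V\otimes \rho),
\]
where the last three arrows come from \eqref{eq:norm-composite}. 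Identifying $NS^V$ with $S^{V\otimes \rho}$, it then remains to show that the resulting composite $S^{V\otimes \rho} \to K(A, V\otimes \rho)$ equals $f_{V\otimes \rho}$.

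To verify this, I would pass to $\Sigma_{C_2}^{\infty}$-adjoints and use $\Sigma_{C_2}^{\infty} NS^V \simeq N\mS^V \simeq \mS^{V\otimes \rho}$. The composite then becomes $N$ applied to the spectrum-level adjoint $\mS^V \otimes \eta\colon \mS^V \to \mS^V \otimes A$ of $f_V$, followed by $\mathrm{id} \otimes \mu$ under the strong monoidal identification $N(\mS^V \otimes A) \simeq \mS^{V\otimes \rho} \otimes NA$. By strong symmetric monoidality of $N$ on spectra, this equals $\mS^{V\otimes \rho} \otimes (\mu \circ N\eta)$, and the commutative ring axiom for $A$ gives $\mu \circ N\eta = \eta$. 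Hence the adjoint is $\mS^{V\otimes \rho} \otimes \eta$, which by definition corresponds to $f_{V\otimes \rho}$ under $\Sigma_{C_2}^{\infty} \dashv \Omega_{C_2}^{\infty}$.

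The main obstacle is organizational rather than conceptual: one must carefully combine the space-level norm, the spectrum-level norm, the $\Sigma_{C_2}^{\infty} \dashv \Omega_{C_2}^{\infty}$ adjunction, the lax monoidal comparison $N\Omega_{C_2}^{\infty} \to \Omega_{C_2}^{\infty} N$, and the strong monoidality of $N$ on spectra. Each piece is standard, but ensuring they combine coherently so that the final reduction really does land on $\mu \circ N\eta = \eta$ requires a careful diagram chase through the definitions from \cite[2.3]{hhr16}. Once these coherences are set up, the commutative ring axiom for $A$ drives the conclusion.
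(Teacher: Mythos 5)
Your proposal is correct and follows essentially the same route as the paper: reduce by naturality of the space-level norm so that only the portion of \eqref{eq:norm-composite} after $NS^V$ needs checking, pass to $\Sigma_{C_2}^{\infty}$-adjoints, use strong monoidality of the spectrum-level norm to reduce to a map on $\mS^0$-factors, and conclude from the commutative-ring unit identity $\mu\circ N\eta = \eta$. The paper packages the adjoint-passing step as the explicit statement ``if $g$ is adjoint to $h$, then $\Omega_G^\infty(\cdot)\circ Ng$ is adjoint to $Nh$,'' but the content is identical.
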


\begin{proof}
    Since $f_V$ commutes with the first two maps in (\ref{eq:norm-composite}), it suffices to show that the diagram 
    \[\begin{tikzcd}
	{NK(A,V)} & {\Omega_G^{\infty}N(\mS^V\otimes A)} & {K(A,V\otimes \rho)} \\
	{NS^V=S^{V\otimes \rho}} && {S^{V\otimes \rho}}
	\arrow[from=1-1, to=1-2]
	\arrow[from=1-2, to=1-3]
	\arrow["{Nf_V}", from=2-1, to=1-1]
	\arrow["\id", from=2-1, to=2-3]
	\arrow["{f_{V\otimes \rho}}"', from=2-3, to=1-3]
    \end{tikzcd}\]
    commutes. 
    
    If $g:X\to \Omega_G^{\infty}Y$ is adjoint to $h:\Sigma_G^{\infty}X\to Y$, then the composite $NX\xra{Ng} N\Omega_G^{\infty}Y\to \Omega_G^{\infty}NY$ is adjoint to $\Sigma_G^{\infty}NX = N\Sigma_G^{\infty}X\xra{Nh} NY$. Thus, the clockwise composite is adjoint to 
    \[\Sigma_G^{\infty}NS^V = N\mS^V \xra{N(\mS^V\otimes \eta)} N(\mS^V\otimes A) = \mS^{V\otimes \rho}\otimes NA\to \mS^{V\otimes \rho} \otimes A,\]
    which is equal to $N\mS^V\otimes (\mS^0\xra{N\eta} NA\to A) = N\mS^V \otimes \eta$.
\end{proof}

We now apply this to $V=2n\rho$. The norm map factors as
\begin{equation}\label{diag:norm}
    \begin{tikzcd}
	{N:} & {K(A,2n\rho)} & {K(A,2n\rho)\times K(A,2n\rho)} & {K(A,4n\rho)} \\
	{\mathrm{u}} & {K_{4n}} & {K_{4n}\times K_{4n}} & {K_{8n}} \\
	{\mathrm{fp}} & {K_{2n}\times K_{4n}} & {K_{4n}} & {K_{4n}\times K_{8n}}
	\arrow[from=1-2, to=1-3]
	\arrow[from=1-3, to=1-4]
	\arrow[from=2-2, to=2-3]
	\arrow[from=2-3, to=2-4]
	\arrow["{j_A}", from=3-2, to=2-2]
	\arrow["{j_A}", from=3-2, to=3-3]
	\arrow["\Delta"', from=3-3, to=2-3]
	\arrow[from=3-3, to=3-4]
	\arrow["{j_A}", from=3-4, to=2-4]
    \end{tikzcd}
\end{equation}
where $C_2$ acts on $K(A,2n\rho)\times K(A,2n\rho)$ by swapping the factors. By Lemma \ref{lem:norm-u}, the map on underlying spaces is the cup square. By a diagram chase, the map on fixed points is given by 
\begin{equation}\label{eq:norm-fp}
    (x,y)\mapsto y\mapsto (?,y^2)
\end{equation} 
where $x$ and $y$ are the fundamental classes in $K_{2n}$ and $K_{4n}$, respectively. We claim that the map to the $K_{4n}$ component if the identity i.e. that $?=y$.

\begin{lemma}
    On fixed points, the norm map $N:K(A,2n\rho)\to K(A,4n\rho)$ is given by $(x,y)\mapsto (y,y^2)$.
\end{lemma}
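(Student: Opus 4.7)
The plan is to identify the unknown $?$ in the formula $(x, y) \mapsto (?, y^2)$. From the diagram chase preceding the lemma (diagram (\ref{diag:norm}) and equation (\ref{eq:norm-fp})), the norm on fixed points factors through the projection $(x, y) \mapsto y$, so it suffices to determine the map $\phi \colon K_{4n} \to K_{4n} \times K_{8n}$ obtained as the fixed points of $NK(A, 2n\rho) \to K(A, 4n\rho)$. The second component of $\phi$ is $y \mapsto y^2$ by Lemma \ref{lem:norm-u}, and since $H^{4n}(K_{4n}; \Q) = \Q\{y\}$ the first component is $y \mapsto \lambda y$ for some $\lambda \in \Q$. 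So the whole problem reduces to showing $\lambda = 1$.

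To pin down $\lambda$, I would apply Lemma \ref{lem:norm-cell} with $V = 2n\rho$. That lemma gives the equality of $C_2$-maps
\[
\bigl(\, S^{4n\rho} = NS^{2n\rho} \xrightarrow{Nf_{2n\rho}} NK(A, 2n\rho) \to K(A, 4n\rho) \,\bigr) \;=\; \bigl(\, S^{4n\rho} \xrightarrow{f_{4n\rho}} K(A, 4n\rho) \,\bigr),
\]
and I will take $C_2$-fixed points and compare the two sides on the fundamental class of $(S^{4n\rho})^{C_2} = S^{4n}$.

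On the right-hand side, $f_{4n\rho}$ on fixed points includes $S^{4n}$ as the generator of the $K_{4n}$-factor of $K(A, 4n\rho)^{C_2}$, sending the generator $\iota$ to $(\iota, 0)$. On the left-hand side, $Nf_{2n\rho}$ is $N_e^{C_2}$ applied to the underlying non-equivariant map $f_{2n\rho}|_e \colon S^{4n} \to K_{4n}$, which is the fundamental class of $K_{4n}$. For any non-equivariant $f \colon X \to Y$, the swap-equivariant map $Nf = f \wedge f$ has fixed points recovering $f$ itself under the diagonal identification $(X \wedge X)^{C_2}_{\mathrm{swap}} \cong X$. Hence the fixed points of $Nf_{2n\rho}$ constitute the fundamental class $\iota \colon S^{4n} \to K_{4n}$, where the target is $NK(A, 2n\rho)^{C_2} \cong K(A, 2n\rho)|_e = K_{4n}$. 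Postcomposing with $\phi$ then gives $\iota \mapsto (\lambda \iota, \iota^2) = (\lambda \iota, 0)$, using $H^{8n}(S^{4n}; \Q) = 0$.

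Equating the two sides yields $(\lambda \iota, 0) = (\iota, 0)$, so $\lambda = 1$, completing the proof. The main technical point is the identification $(X \wedge X)^{C_2}_{\mathrm{swap}} \cong X$ via the diagonal inclusion $x \mapsto x \wedge x$; once this is set up carefully, so that the fixed points of $Nf_{2n\rho}$ are literally the fundamental class of $K_{4n}$, the rest is a direct comparison of the two sides of Lemma \ref{lem:norm-cell} on the generator of $S^{4n}$.
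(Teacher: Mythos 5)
Your proposal is essentially the paper's argument, repackaged: the paper draws a single commutative diagram whose top-right square is precisely the content of Lemma~\ref{lem:norm-cell} applied to $V = 2n\rho$, and whose bottom row tracks the fundamental class through the bottom cell inclusion, which is exactly what you do by comparing the two sides of Lemma~\ref{lem:norm-cell} on the generator of $S^{4n}$. The one place where you assert rather than argue is the claim that $(f_{4n\rho})^{C_2}\colon S^{4n}\to K_{4n}\times K_{8n}$ sends the generator $\iota$ to $(\iota,0)$; a priori degree reasons only give $(\mu\iota,0)$ for some $\mu\in\Q$, so your comparison only yields $\lambda=\mu$ until you pin down $\mu=1$. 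This is precisely what the paper supplies via $a_{4n\sigma}$ and Lemma~\ref{lem:euler-a}: the bottom row $S^{4n}\xra{f_{4n}}K(A,4n)\xra{a_{4n\sigma}}K(A,4n\rho)$ shows $f_{4n\rho}$ restricted to fixed points factors through $a_{4n\sigma}\circ f_{4n}$, and the explicit identification in Lemma~\ref{lem:euler-a} (the first component is the projection $\frac{\Q[T]}{T^2-2T}\to\Q$, which sends the unit to $1$) gives $\mu = 1$. You should cite Lemma~\ref{lem:euler-a} at that point; otherwise the argument is complete and correct.
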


\begin{proof}
    The map $K(A,2n\rho)\times K(A,2n\rho)\to K(A,4n\rho)$ in (\ref{diag:norm}) factors as the top row of the commutative diagram
    \[\begin{tikzcd}
	{K(A,2n\rho)\times K(A,2n\rho)} & {NK(A,2n\rho)} & {K(A,4n\rho)} \\
	{S^{2n\rho}\times S^{2n\rho}} & {NS^{2n\rho}=S^{4n\rho}} \\
	{S^{4n}} & {S^{4n}} & {K(A,4n)}
	\arrow[from=1-1, to=1-2]
	\arrow[from=1-2, to=1-3]
	\arrow[from=2-1, to=1-1]
	\arrow["{\mathrm{proj}}", from=2-1, to=2-2]
	\arrow[from=2-2, to=1-2]
	\arrow["{f_{4n\rho}}"', from=2-2, to=1-3]
	\arrow["\Delta", from=3-1, to=2-1]
	\arrow["\id", from=3-1, to=3-2]
	\arrow["{a_{4n\sigma}}", from=3-2, to=2-2]
	\arrow["{f_{4n}}", from=3-2, to=3-3]
	\arrow["{a_{4n\sigma}}"', from=3-3, to=1-3].
    \end{tikzcd}\]
    Since the diagonal $\Delta$ is the inclusion of fixed points, the left column factors through the fundamental class $y:S^{4n}\to K_{4n} = \left( K(A,2n\rho)\times K(A,2n\rho)\right)^{C_2}$. By Lemma \ref{lem:euler-a}, the Euler class $a_{4n\sigma}$ picks out the component $K_{4n}$ in $K(A,4n\rho)^{C_2}$. The commutativity of the outer diagram shows that $?=y$ in (\ref{eq:norm-fp}).
\end{proof}

\section{The fiber}
Consider the map $\iota^2-N:K(A,2n\rho)\to K(A,4n\rho)$. Since it is 0 on underlying spaces and is given by $K_{2n}\times K_{4n}\to K_{4n}$, $(x,y)\mapsto x^2-y$ on fixed points, $\iota^2-N$ factors through the inclusion $K(I,4n)=K(I,4n\rho)\to K(A,4n\rho)$. 
\begin{defn}
    Let $F_{2n}$ be the fiber of the map $\iota^2 -N: K(A,2n\rho)\to K(I,4n\rho)$. The inclusion $F_{2n}\to K(A,2n\rho)$ is given by
    \[\begin{tikzcd}
	& {F_{2n}} & {K(A,2n\rho)} \\
	{\mathrm{u}} & {K_{4n}} & {K_{4n}} \\
	{\mathrm{fp}} & {K_{2n}} & {K_{2n}\times K_{4n}}
	\arrow[from=1-2, to=1-3]
	\arrow["\id", from=2-2, to=2-3]
	\arrow["{x^2}", from=3-2, to=2-2]
	\arrow["{(\id, x^2)}"', from=3-2, to=3-3]
	\arrow["{j_A}"', from=3-3, to=2-3].
    \end{tikzcd}\]
\end{defn}

The equivariant Euler class $\epsilon\in {BSU_{\R}}_{2n}$ is classified by a map $\epsilon: {BSU_{\R}}_{2n}\to K(A,2n\rho)$ given on underlying by $e_{4n}=c_{2n}\in H^{4n}(BSU_{2n};\Q)$ and on fixed points by $(e_{2n}, e_{2n}^2)$ where $e_{2n}\in H^{2n}(BSO_{2n};\Q)$. Since $(\iota^2-N)(\epsilon) = 0$, $\epsilon$ factors through $F_{2n}$. By (\ref{eq:q-coh}), we have the following.

\begin{thm}\label{thm:main}
    The map
    \[
        {BSU_{\R}}_{2n}\to \left(\prod_{i=2}^{2n-1}K(\uZ,i\rho)\right) \times F_{2n}
    \]
    given by the Chern classes $c_i$ for $i<n$ and the Euler class $\epsilon$ is a rational $C_2$-equivalence.    
\end{thm}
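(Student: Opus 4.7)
The plan is to verify the claim by checking separately on underlying spaces and on $C_2$-fixed points, using the Elmendorf-type criterion that a $C_2$-map is a $C_2$-weak equivalence precisely when it induces weak equivalences on the underlying space and on the genuine fixed points. Rationally, this reduces the theorem to two classical cohomology computations.

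First I would identify the target on the underlying space. Since $K(\uZ,i\rho)$ has underlying $K(\Q,2i)$ and $F_{2n}$ has underlying $K_{4n}$, the target becomes $\prod_{i=2}^{2n} K(\Q,2i)$. The map is given by the Chern classes of the underlying complex bundle, with the Euler class contributing the top Chern class $c_{2n}$. This is a rational equivalence by the classical fact $H^*(BSU_{2n};\Q) = \Q[c_2,\ldots,c_{2n}]$ recorded in (\ref{eq:q-coh}).

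Next I would compute the map on fixed points using the data assembled in the preliminaries and Section 3. Three ingredients feed in: (i) $K(\uZ, i\rho)^{C_2}$ is $K(\Q,2i)$ when $i$ is even and contractible when $i$ is odd, so the odd-index factors drop out on fixed points, leaving the Pontryagin-class part $\prod_{k=1}^{n-1} K(\Q,4k)$; (ii) $F_{2n}$ has fixed points $K_{2n}$ by its defining diagram, with inclusion into $K_{2n}\times K_{4n}$ given by $x\mapsto (x,x^2)$; (iii) the complexification map $BSO_{2n}\to BSU_{2n}$ sends $c_{2k}\mapsto p_k$ for $k<n$ and $c_{2n}\mapsto e_{2n}^2$, while the component of the equivariant Euler class landing in $K_{2n}$ is precisely the classical Euler class $e_{2n}$. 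Assembling these, the map on fixed points becomes $BSO_{2n}\to \prod_{k=1}^{n-1}K(\Q,4k)\times K(\Q,2n)$ classifying the tuple $(p_1,\ldots,p_{n-1},e_{2n})$, which is a rational equivalence by (\ref{eq:q-coh}).

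The main obstacle is not this final assembly, which is essentially a bookkeeping exercise, but rather the preparatory work already carried out in Section 3: constructing $F_{2n}$ with the correct underlying and fixed-point spaces, determining $\iota^2$ and $N$ on both components of $K(A,2n\rho)^{C_2}$, and verifying that the equivariant Euler class $\epsilon$ factors through $F_{2n}$ via the relation $e(V\otimes\C) = e(V)^2$. Once those identifications are accepted, the theorem follows directly from (\ref{eq:q-coh}) by the two checks above.
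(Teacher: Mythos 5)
Your proposal is correct and matches the paper's argument, which is also to reduce, via the fixed-points and underlying-space criterion for rational $C_2$-equivalences, to the two classical cohomology isomorphisms $H^*(BSU_{2n};\Q)\cong\Q[c_2,\dots,c_{2n}]$ and $H^*(BSO_{2n};\Q)\cong\Q[p_1,\dots,p_{n-1},e_{2n}]$ cited in (\ref{eq:q-coh}). The paper leaves this assembly implicit (its proof is essentially ``By (\ref{eq:q-coh})''), so your write-up supplies exactly the bookkeeping the paper omits, with the same ingredients: the fixed points of $K(\uZ,i\rho)$ vanish for $i$ odd, $F_{2n}$ has fixed points $K_{2n}$ with inclusion $x\mapsto(x,x^2)$, and $\epsilon$ restricts to the classical Euler class $e_{2n}$ on fixed points.
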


\begin{cor}
    There are rational $C_2$-equivalences
    \[S^{2n\rho-1} \simeq \Sigma^{-1}F_{2n}\] 
    and 
    \[S^{(2n+1)\rho-1}\simeq K(\uZ,(2n+1)\rho-1)\times \mathrm{fib}\left(K(I,2n)\xra{(-)^2} K(I,4n) \right)\]
    where $(-)^2$ is the squaring map on fixed points.
\end{cor}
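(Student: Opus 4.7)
The plan is to deduce both equivalences from the main theorem by substituting it into the $C_2$-fibrations
\[
S^{m\rho-1}\to {BSU_{\R}}_{m-1}\to {BSU_{\R}}_m
\]
coming from $SU_{\R,m}/SU_{\R,m-1}\simeq S^{m\rho-1}$ (with underlying $S^{2m-1}$ and fixed points $S^{m-1}$), applied for $m=2n$ to produce the first equivalence and for $m=2n+1$ to produce the second.

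For $m=2n$, the odd case from the introduction and Theorem \ref{thm:main} describe both total space and base as products realized by the Chern classes:
\[
{BSU_{\R}}_{2n-1}\simeq \prod_{i=2}^{2n-1}K(\uZ,i\rho), \qquad {BSU_{\R}}_{2n}\simeq \prod_{i=2}^{2n-1}K(\uZ,i\rho)\times F_{2n}.
\]
The stabilization sends $c_i\mapsto c_i$ for $i\leq 2n-1$ and kills the Euler class, so rationally it is the inclusion of the first factor. Being a section of the projection onto $\prod_{i=2}^{2n-1}K(\uZ,i\rho)$, its fiber is $\Omega F_{2n}=\Sigma^{-1}F_{2n}$, which therefore identifies with $S^{2n\rho-1}$.

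For $m=2n+1$, write $X=\prod_{i=2}^{2n-1}K(\uZ,i\rho)$, so ${BSU_{\R}}_{2n}\simeq X\times F_{2n}$ and
\[
{BSU_{\R}}_{2n+1}\simeq X\times K(\uZ,2n\rho)\times K(\uZ,(2n+1)\rho).
\]
The stabilization is the identity on $X$, hits $K(\uZ,2n\rho)$ via the composite $\phi\colon F_{2n}\to K(A,2n\rho)\to K(\uZ,2n\rho)$ induced by $A\to \uZ$ (since $c_{2n}$ coincides with the equivariant Euler class), and vanishes into $K(\uZ,(2n+1)\rho)$ (since $c_{2n+1}$ is zero on rank-$2n$ bundles). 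The elementary fact that a map of the shape $(\phi,0)\colon Z\to Y_1\times Y_2$ has homotopy fiber $\mathrm{fib}(\phi)\times \Omega Y_2$ then splits the total fiber as
\[
\mathrm{fib}(\phi)\times K(\uZ,(2n+1)\rho-1).
\]

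The remaining and most delicate step is to identify $\mathrm{fib}(\phi)$ with $\mathrm{fib}\bigl(K(I,2n)\xra{(-)^2}K(I,4n)\bigr)$. Both sides have trivial underlying space: on underlying $\phi$ is the identity $K_{4n}\to K_{4n}$, while $K(I,\cdot)$ is trivial because $I(C_2/e)=0$. On fixed points, $\phi$ unwinds as
\[
K_{2n}\xra{(\id,x^2)} K_{2n}\times K_{4n}\xra{\mathrm{pr}_2} K_{4n},
\]
i.e.\ the cup square $x\mapsto x^2$, matching the fixed-point description of $(-)^2\colon K(I,2n)\to K(I,4n)$. The crux is justifying that the second arrow above really is the projection onto $K_{4n}$: this amounts to showing that the map of spectra $\mS^{2n\rho}\otimes A\to \mS^{2n\rho}\otimes \uZ$ kills the contribution of the kernel Mackey functor $I$, which is precisely what produces the extra $K_{2n}$-summand in $K(A,2n\rho)^{C_2}$ through the chain description (\ref{eq:complex}). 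Once this is pinned down, matching on underlying and fixed points yields the desired rational $C_2$-equivalence.
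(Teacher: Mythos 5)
Your proposal follows the same strategy as the paper's proof: use the fiber sequence $S^{m\rho-1}\to {BSU_{\R}}_{m-1}\to {BSU_{\R}}_m$, decompose source and target via Theorem~\ref{thm:main} (and the odd case from the introduction), and identify the resulting fiber. The even case is handled identically. For the odd case, you both reduce to identifying the fiber of the composite $\phi\colon F_{2n}\to K(A,2n\rho)\to K(\uZ,2n\rho)$ with $\mathrm{fib}\bigl(K(I,2n)\xra{(-)^2}K(I,4n)\bigr)$; the only difference is the last step. The paper does this cleanly by an iterated-fiber argument: since $K(I,2n\rho)=\mathrm{fib}\bigl(K(A,2n\rho)\to K(\uZ,2n\rho)\bigr)$ and $F_{2n}=\mathrm{fib}\bigl(K(A,2n\rho)\to K(I,4n\rho)\bigr)$, the fiber of $\phi$ is the fiber of the induced map $K(I,2n\rho)\to K(I,4n\rho)$, which is the cup square. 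You instead compute $\phi$ explicitly on underlying spaces and fixed points and match it against $(-)^2$; this is correct but, as you note yourself, carries the implicit burden of upgrading a pointwise match on underlying/fixed-point data to a genuine $C_2$-equivalence. The paper's fiber-square bookkeeping avoids that worry (modulo the same kind of identifications already made for $\iota^2-N$ earlier in the paper), so it is the slightly tighter argument, but the content is the same.
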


\begin{proof}
    This follows from the fiber sequence
    \[S^{m\rho-1}\to {BSU_{\R}}_{(m-1)}\to {BSU_{\R}}_m.\]
    The even case is clear. For the odd case $m=2n+1$, $S^{(2n+1)\rho-1}\simeq K(\uZ, (2n+1)\rho-1)\times H$ where $H$ is the fiber of the composite $F_{2n}\to K(A,2n\rho)\to K(\uZ,2n\rho)$. Since $K(I,2n\rho) = \mathrm{fib}\left(K(A,2n\rho)\to K(\uZ, 2n\rho)\right)$ and $F_{2n} = \mathrm{fib}\left(K(A,2n\rho)\to K(I,4n\rho)\right)$, $H$ is the fiber of the cup square $K(I,2n)\to K(I,4n)$. Here we have used the fact that $K(I,n\rho) = K(I,n)$.
\end{proof}

\bibliographystyle{alpha}
\bibliography{ref}

@article {dug05,
    AUTHOR = {Dugger, Daniel},
     TITLE = {An {A}tiyah-{H}irzebruch spectral sequence for {$KR$}-theory},
   JOURNAL = {$K$-Theory},
  FJOURNAL = {$K$-Theory. An Interdisciplinary Journal for the Development,
              Application, and Influence of $K$-Theory in the Mathematical
              Sciences},
    VOLUME = {35},
      YEAR = {2005},
    NUMBER = {3-4},
     PAGES = {213--256},
      ISSN = {0920-3036,1573-0514},
   MRCLASS = {19L64 (55S45 55T25)},
  MRNUMBER = {2240234},
MRREVIEWER = {G\'erald\ Gaudens},
       DOI = {10.1007/s10977-005-1552-9},
       URL = {https://doi-org.ezp-prod1.hul.harvard.edu/10.1007/s10977-005-1552-9},
}

@incollection {hhr11,
    AUTHOR = {Hill, Michael A. and Hopkins, Michael J. and Ravenel, Douglas
              C.},
     TITLE = {The {A}rf-{K}ervaire problem in algebraic topology: sketch of
              the proof},
 BOOKTITLE = {Current developments in mathematics, 2010},
     PAGES = {1--43},
 PUBLISHER = {Int. Press, Somerville, MA},
      YEAR = {2011},
      ISBN = {978-1-57146-228-2},
   MRCLASS = {55Q10 (55N22 55P43 55Q91 55T15 57R90)},
  MRNUMBER = {2906370},
MRREVIEWER = {Paul\ G.\ Goerss},
}

@article {hhr16,
    AUTHOR = {Hill, M. A. and Hopkins, M. J. and Ravenel, D. C.},
     TITLE = {On the nonexistence of elements of {K}ervaire invariant one},
   JOURNAL = {Ann. of Math. (2)},
  FJOURNAL = {Annals of Mathematics. Second Series},
    VOLUME = {184},
      YEAR = {2016},
    NUMBER = {1},
     PAGES = {1--262},
      ISSN = {0003-486X,1939-8980},
   MRCLASS = {55P91 (55N22 55P42 55Q45 55T15 55U35 57R15)},
  MRNUMBER = {3505179},
MRREVIEWER = {Paul\ G.\ Goerss},
       DOI = {10.4007/annals.2016.184.1.1},
       URL = {https://doi-org.ezp-prod1.hul.harvard.edu/10.4007/annals.2016.184.1.1},
}

@article {afh22,
    AUTHOR = {Asok, Aravind and Fasel, Jean and Hopkins, Michael J.},
     TITLE = {Localization and nilpotent spaces in {$\Bbb A^1$}-homotopy
              theory},
   JOURNAL = {Compos. Math.},
  FJOURNAL = {Compositio Mathematica},
    VOLUME = {158},
      YEAR = {2022},
    NUMBER = {3},
     PAGES = {654--720},
      ISSN = {0010-437X,1570-5846},
   MRCLASS = {55P60 (14F42 19D45 20G15)},
  MRNUMBER = {4430935},
MRREVIEWER = {Oliver\ R\"ondigs},
       DOI = {10.1112/s0010437x22007321},
       URL = {https://doi-org.ezp-prod1.hul.harvard.edu/10.1112/s0010437x22007321},
}

@book {mvw06,
    AUTHOR = {Mazza, Carlo and Voevodsky, Vladimir and Weibel, Charles},
     TITLE = {Lecture notes on motivic cohomology},
    SERIES = {Clay Mathematics Monographs},
    VOLUME = {2},
 PUBLISHER = {American Mathematical Society, Providence, RI; Clay
              Mathematics Institute, Cambridge, MA},
      YEAR = {2006},
     PAGES = {xiv+216},
      ISBN = {978-0-8218-3847-1; 0-8218-3847-4},
   MRCLASS = {14F42 (19E15)},
  MRNUMBER = {2242284},
MRREVIEWER = {Thomas\ Geisser},
}

\end{document}